 \numberwithin{equation}{section}
 \theoremstyle{plain}
 \newtheorem{theorem}[equation]{Theorem}
 \newtheorem{corollary}[equation]{Corollary}
 \newtheorem{lemma}[equation]{Lemma}
 \newtheorem{question}[equation]{Question}
 \theoremstyle{definition}
 \newtheorem{definition}[equation]{Definition}
 \newtheorem{example}[equation]{Example}
 \newtheorem{remark}[equation]{Remark}
 \newtheorem{hypothesis}[equation]{Hypothesis}
 \newcommand{\cat}[1]{\ensuremath{\mathsf{#1}}}
 \newcommand{\op}{\ensuremath{^\mathrm{op}}}
 \newcommand{\Set}{\cat{Set}}
 \newcommand{\Ring}{\cat{Ring}}
 \newcommand{\cRing}{\cat{cRing}}
 \newcommand{\Ringop}{\Ring\op}
 \newcommand{\cRingop}{\cRing\op}
 \newcommand{\Topos}{\cat{Topos}}
 \newcommand{\RingedTopos}{\cat{RingedTopos}}
 \newcommand{\RingedCat}{\cat{RingedCat}}
 \newcommand{\fpCat}{\cat{fpCat}}
 \newcommand{\Cat}{\cat{Cat}}
 \newcommand{\Sh}{\cat{Sh}}
 \newcommand{\ShR}{\cat{ShR}}
 \newcommand{\ShRing}{\cat{ShRing}}
 \newcommand{\RingObj}{\cat{RingObj}}
 \DeclareMathOperator{\Spec}{Spec}
 \DeclareMathOperator{\Sp}{Sp}
 \DeclareMathOperator{\Hom}{Hom}
 \DeclareMathOperator{\id}{id}
 \DeclareMathOperator{\Obj}{Obj}
 \newcommand{\C}{\mathcal{C}}
 \newcommand{\D}{\mathcal{D}}
 \newcommand{\X}{\mathcal{X}}
 \newcommand{\Y}{\mathcal{Y}}
 \newcommand{\ZZ}{\mathcal{Z}}
 \newcommand{\Z}{\mathbb{Z}}
 \newcommand{\complex}{\mathbb{C}}
 \newcommand{\M}{\mathbb{M}}
 \newcommand{\separate}{\bigskip}
 \renewcommand{\O}{\mathcal{O}}
\begin{document}

\title{Sheaves that fail to represent matrix rings}
\author{Manuel L. Reyes}
\address{Department of Mathematics\\
Bowdoin College\\
8600 College Station\\
Brunswick, ME 04011--8486}
\email{reyes@bowdoin.edu}
\urladdr{http://www.bowdoin.edu/~reyes/}

\dedicatory{Dedicated to Professor Tsit Yuen Lam on the occasion of his seventieth birthday.}

\date{November 22, 2013}
\subjclass[2010]{14A22, 16B50, 18B25, 18F20}
\keywords{noncommutative ring, Zariski spectrum, sheaf, coverage, ringed topos}

\begin{abstract}
There are two fundamental obstructions to representing noncommutative rings via sheaves.
First, there is no subcanonical coverage on the opposite of the category of rings that
includes all covering families in the big Zariski site.
Second, there is no contravariant functor $F$ from the category of rings to the category of  ringed
categories whose composite with the global sections functor is naturally isomorphic to the identity,
such that $F$ restricts to the Zariski spectrum functor $\Spec$ on the category of commutative
rings (in a compatible way with the natural isomorphism).
Both of these no-go results are proved by restricting attention to matrix rings.
\end{abstract}

\maketitle

\section{Introduction}
\label{sec:intro}

Recent results from~\cite{Reyes} and~\cite{BergHeunen:nogo, BergHeunen:extending}
show that there are fundamental obstructions to the extension of
the Zariski spectrum functor from commutative rings to noncommutative rings. 
Specifically, if $F$ is a contravariant functor from rings to sets (or topological spaces) whose restriction
to commutative rings is the usual prime spectrum $\Spec$, then $F(\M_n(\complex)) = \varnothing$ for
$n \geq 3$; see~\cite{Reyes}. 
There are various constructions that allow one to study ``spaces without points.'' For instance,
locales formalize the properties of the lattice of open sets on a topological space, and toposes formalize
the properties of the category of sheaves of sets on a topological space.
Thus one might wish to dodge this obstruction by considering the prime
spectrum as a locale or topos (i.e., as a ``pointless space''). But the same type of obstruction was shown to
hold for functors taking values in the categories of locales and toposes; see~\cite{BergHeunen:extending}.

(As the intended audience of this paper includes ring theorists, who are not necessarily
acquainted with some of the categorical terminology used here, later sections include
very basic and brief accounts of relevant topics with suggestions for further reading.)

Yet another attempt to avoid such obstructions would be to replace the spectrum of a commutative
ring $R$ with certain sheaves associated to it. Of course, there is more than one kind of sheaf associated
to a commutative ring $R$ and its spectrum. We focus on two specific types: 
\begin{itemize}
\item[(I)] The functor $\Hom_\cRing(-,R) \colon \cRingop \to \Set$ is a \emph{sheaf on the big Zariski site.}
\item[(II)] The spectrum of $R$ is equipped with its \emph{structure sheaf} $\O_{\Spec(R)}$, a particular
member of the \emph{category of sheaves of rings} on its underlying topological space.
\end{itemize}
By replacing $\Spec(R)$ with either of the objects in (I) or (II) above, one passes to a purely
categorical setting that avoids any need for ``points'' or ``open sets.''
One might hope that after the spectrum of a ring is recast in either of these perspectives, it would
easily extend to noncommutative rings.
But to the contrary, the main results of this paper provide obstructions to noncommutative extensions
of the spectrum in the vein of (I) and (II) above.

We now summarize the two major results to be proved below. The first obstruction concerns the
view of $\Spec(R)$ expressed in (I) above, and will be proved in Section~\ref{sec:coverage}. 
A \emph{coverage} on a category $\C$ is a designation of certain sets of morphisms
in $\C$ as ``covering families'' and allows one to view certain contravariant set-valued
functors on that category as \emph{sheaves}.
A coverage $J$ on a category $\C$ is called \emph{subcanonical} if, for all
$X \in \C$, the representable functor $\Hom_\C(-,X) \colon \C\op \to \Set$ is a sheaf for $J$. 

\begin{theorem}
\label{thm:no coverage}
There is no subcanonical coverage $J$ on $\Ringop$ such that
every covering family for the big Zariski site is a covering family for $(\Ringop, J)$.
\end{theorem}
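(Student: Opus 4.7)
The plan is to apply the coverage axiom --- which requires that every covering family can be pulled back along any morphism into a covering family of the domain --- in order to force $J$ to contain a degenerate cover of a matrix ring, and then to observe that subcanonicity of $J$ fails at that cover.

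Fix $n \geq 2$ and write $M = \M_n(\complex)$. Begin with the Zariski cover $\mathcal{F} = \{R \to R[t^{-1}],\ R \to R[(1-t)^{-1}]\}$ of $R = \complex[t]$, which by hypothesis is a covering family in $(\Ringop, J)$. Pair this with the ring homomorphism $\phi \colon R \to M$ sending $t$ to the idempotent matrix unit $A := e_{11}$, so that $A$ and $1-A$ are both nonzero and satisfy $A(1-A) = 0$ in $M$. The coverage axiom then provides a covering family $\mathcal{G} = \{h_j \colon M \to V_j\}_j$ in $J$ such that each composite $R \to M \to V_j$ factors through either $R \to R[t^{-1}]$ or $R \to R[(1-t)^{-1}]$.

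The crucial step is to show that each $V_j$ is the zero ring. Since $M$ is simple, each $h_j$ is either zero (which gives $V_j = 0$) or injective. In the injective case, the factoring condition forces either $h_j(A)$ or $h_j(1-A)$ to be invertible in $V_j$. But $A(1-A) = 0$ in $M$ with both factors nonzero, so by injectivity $h_j(A)$ and $h_j(1-A)$ are nonzero zero divisors in $V_j$, hence neither is invertible --- a contradiction. Therefore $h_j$ must be the zero map, forcing $V_j = 0$.

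With every $V_j = 0$, the sheaf condition for a representable $y(T) = \Hom_\Ring(T, -)$ at the cover $\mathcal{G}$ forces $\Hom_\Ring(T, M)$ to be in bijection with the one-element set of matching families (since $\Hom_\Ring(T, 0) = \{*\}$), and the same conclusion holds vacuously if $\mathcal{G}$ happens to be empty. Subcanonicity requires each $y(T)$ to be a sheaf, so we would deduce $|\Hom_\Ring(T, M)| = 1$ for every ring $T$. But for $T = \Z[x]$ we have $\Hom_\Ring(\Z[x], M) \cong M$, which has many elements --- the desired contradiction. The main obstacle is the simplicity-plus-zero-divisor argument of the third paragraph; once that is established, the subcanonicity conclusion is immediate.
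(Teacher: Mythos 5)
Your proof is correct and follows essentially the same strategy as the paper's: pull back a Zariski covering family along a carefully chosen homomorphism into $\M_n(\complex)$ (using that an off-diagonal idempotent like $E_{11}$ becomes a unit upon localization but generates the unit ideal in the matrix ring, forcing the pullback covers to consist of zero rings), then invoke subcanonicity of a representable presheaf to obtain a contradiction. The only cosmetic differences are that you start from the Zariski cover of $\complex[t]$ rather than of $k\times k$, appeal to simplicity of $\M_n(\complex)$ rather than an explicit trace identity, use the weaker (factoring) form of the coverage axiom, and test against the representable $\Hom_{\Ring}(\Z[x],-)$ rather than $\Hom_{\Ring}(0,-)$.
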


Now we state the second obstruction, which is related to the view of $\Spec(R)$ given in~(II) above.
A \emph{ringed category} $(\X, \O)$ is category $\X$ with finite products, equipped with a
ring object $\O$ in $\X$. These form the objects of a category $\RingedCat$, which is
described in detail in Definition~\ref{def:ringed categories}.
The \emph{ring of global sections} of a ringed category $(\X,\O)$ is the ring $\Hom_\X(1_\X,\O_\X)$,
where $1_\X$ is the terminal object of $\X$.
Ringed categories form a broad generalization of ringed spaces~\cite[II.2]{Hartshorne}.

\begin{theorem}
\label{thm:ringed category}
Let $F \colon \Ringop \to \RingedCat$ be a contravariant functor from the category of rings
to the category of ringed categories, whose restriction to the category of commutative rings
is isomorphic to $\Spec$. Suppose that there exists a natural transformation of functors
\[
\eta \colon \id_{\Ring} \to \Gamma \circ F
\]
(where $\Gamma \colon \RingedCat\op \to \Ring$ is the global sections functor), which restricts
on $\cRing$ to the canonical isomorphism from a commutative ring to the ring of global sections
of its Zariski spectrum.
Then for any ring $k$ and integer $n \geq 2$, the ringed category $F(\M_n(k))$
has the zero structure ring object (and therefore has zero global sections).
In particular, $\eta$ cannot be a natural isomorphism.
\end{theorem}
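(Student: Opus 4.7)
The plan is to reduce to showing that the ring $S := \Gamma F(\M_n(k))$ is zero, from which the conclusion about the structure ring object follows formally. Writing $F(\M_n(k)) = (\X, \O)$, if $S = 0$ then the two morphisms $0_\O, 1_\O \colon 1_\X \to \O$ coincide. Precomposing with the unique $T \to 1_\X$ for each $T \in \X$ forces the zero and unit of the ring $\Hom_\X(T, \O)$ to agree, so $\Hom_\X(T, \O)$ is the zero ring (a singleton set) for every $T$, making $\O$ terminal in $\X$. This is exactly the statement that the structure ring object is zero.

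A key ring-theoretic observation is that $\M_n(k)$ has zero abelianization whenever $n \geq 2$. The commutator $[e_{ij}, e_{ji}] = e_{ii} - e_{jj}$ lies in the commutator ideal $I$, and sandwiching by $e_{ii}$ on both sides yields $e_{ii}(e_{ii} - e_{jj})e_{ii} = e_{ii} \in I$ (for any $j \neq i$), so $1 = \sum_i e_{ii} \in I$. Consequently $\M_n(k)$ admits no nonzero unital ring homomorphism into any commutative ring.

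Granting this, it suffices to show that $\phi := \eta_{\M_n(k)} \colon \M_n(k) \to S$ has commutative image in $S$: then $\phi$ factors through the abelianization $\M_n(k)^{\mathrm{ab}} = 0$, which forces $1_S = \phi(1) = 0$ and hence $S = 0$. Naturality of $\eta$ along each inclusion $\iota_a \colon k[a] \hookrightarrow \M_n(k)$ gives $\phi \circ \iota_a = \Gamma F(\iota_a)$ (since $\eta_{k[a]}$ is the canonical isomorphism), so $\phi(a)\phi(b) = \phi(ab) = \phi(ba) = \phi(b)\phi(a)$ automatically whenever $a, b$ lie in a common commutative subring of $\M_n(k)$.

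The main obstacle is forcing $[\phi(a), \phi(b)] = 0$ when $[a, b] \neq 0$ in $\M_n(k)$. My expectation is that one leverages naturality along the abundance of commutative subrings $k[a] \subseteq \M_n(k)$ together with the inner conjugations $\sigma_u \colon x \mapsto u x u^{-1}$ that intertwine different embedded commutative subalgebras (e.g.\ $\sigma_u(k[e_{11}]) = k[u e_{11} u^{-1}]$), exploiting that $F$ restricts to $\Spec$ on $\cRing$ where ring objects are commutative. Identifying the precise categorical mechanism that delivers this image-commutativity of $\phi$ is the crux of the argument; this is presumably where the strengthening to all $n \geq 2$ (rather than the Kochen--Specker regime $n \geq 3$ of previous no-go results) draws its extra strength.
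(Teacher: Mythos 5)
Your two bookkeeping reductions are fine: if $\Gamma F(\M_n(k))=0$ then the structure ring object is terminal (this is the paper's Lemma~\ref{lem:zero sections}), and $\M_n(k)$ indeed has zero abelianization for $n\geq 2$. But the step you yourself identify as the crux --- that $\phi=\eta_{\M_n(k)}$ has commutative image --- is not proved, and the tools you propose cannot deliver it. There is no reason for the target ring $S=\Gamma(\X,\O_\X)$ to be commutative: ring objects in a ringed category are only required to be associative and unital, so nothing in the hypotheses constrains $S$, and naturality along the commutative subrings $k[a]\hookrightarrow \M_n(k)$ (or along inner automorphisms) only yields $\phi(a)\phi(b)=\phi(ab)$ when $a,b$ already commute in $\M_n(k)$; it says nothing about $[\phi(a),\phi(b)]$ for noncommuting $a,b$. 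In fact an argument that uses only the commutative subalgebras of $\M_n$ and how they are interwoven is exactly the Kochen--Specker-style mechanism that the paper explicitly contrasts itself with, and such partial-algebra obstructions are known to fail at $n=2$; so this route cannot explain why the theorem holds for all $n\geq 2$. More structurally, your argument only uses the endofunctor $\Gamma\circ F$ of $\Ring$ and the transformation $\eta$, never the fact that $F(R)$ for commutative $R$ is the actual ringed space $\Spec(R)$ with its sheaf condition; that information is indispensable.

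The paper's proof uses a different mechanism. One reduces to $k=\Z$ and applies $F$ (equivalently, the derived functor $\Sp$ of Theorem~\ref{thm:zerosections}) to the diagonal embedding $d\colon \Z^n\hookrightarrow \M_n(\Z)$. Since $F(\Z^n)$ is genuinely $\Spec(\Z^n)$, a disjoint union of $n$ clopen pieces $U_j$, the morphism $F(d)$ supplies a direct image sheaf of rings $f_*\O_X$ on $\Spec(\Z^n)$ together with a map $\O_{\Spec \Z^n}\to f_*\O_X$, and --- because morphisms of ringed categories preserve global sections --- $\Gamma(\Spec \Z^n, f_*\O_X)\cong\Gamma(X,\O_X)$ receives $\M_n(\Z)$ compatibly with $d$ (this is where naturality of $\eta$ along $d$ enters). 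Restricting to each $U_j$ gives a ring $\O(U_j)$ admitting $g\colon\Z\to\O(U_j)$ and $h\colon\M_n(\Z)\to\O(U_j)$ with $h\circ d=g\circ\pi_j$; Lemma~\ref{lem:emptypreimage} (the idempotent $E_{\ell\ell}$, $\ell\neq j$, is full in $\M_n(\Z)$ yet $\pi_j(e_\ell)=0$) forces $\O(U_j)=0$, with no commutativity of the target needed anywhere. Sheaf gluing over the cover $\{U_j\}$ then gives $\Gamma(X,\O_X)=0$, and your first reduction finishes. So the missing idea in your proposal is precisely the use of the sheaf-theoretic structure of $F(\Z^n)=\Spec(\Z^n)$ and the global-sections-preserving direct image along $d$, rather than any commutativity of the image of $\eta_{\M_n(k)}$.
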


This result is proved in Section~\ref{sec:sheaves of rings}. A related obstruction was proved
in~\cite{BergHeunen:extending}; see Remark~\ref{rem:ringed topos} for a more
detailed comparison of that result with the one above.

Theorems~\ref{thm:no coverage} and~\ref{thm:ringed category} actually follow from
more general (but also more technical) results stated in each section. 

\subsection*{Acknowledgments}

It is my pleasure to thank Benno van den Berg, Luisa Fiorot, Chris Heunen, and Matthew Satriano for
helpful discussions, suggestions, and comments.

\subsection*{Conventions}

In this paper, all rings and ring homomorphisms are unital.
The categories of sets, rings, and commutative rings are respectively denoted by
$\Set$, $\Ring$, and $\cRing$.
We view a contravariant functor from a category $\C$ to a category $\D$ equivalently
as an arrow-reversing functor $\C \to \D$ or as an arrow-preserving functor $\C\op \to \D$.
We view all terminal objects of a category $\C$, which are canonically isomorphic,
as being ``equal'' for the sake of simplicity.

\section{A diagram of rings}
\label{sec:diagram}

The major results of this paper reduce to a basic diagram in the category of rings.

We introduce some notation to be used in the remainder of this paper. Let $k$ be a ring
and $n \geq 1$ an integer. We let $k^n = \prod_{i=1}^n k$ denote the $n$-fold product.
For $0 \leq i \leq n$ let $\pi_i \colon k^n \to k$ denote the projection map onto
the $i$th coordinate.  
In the ring $k^n$, let $e_i$ denote element whose $i$th entry is $1$ and whose other
entries are zero. In the matrix ring $\M_n(k)$ we let $E_{ij}$ denote
the matrix whose $(i,j)$-entry is $1$ and whose other entries are zero.
Also, we let $d \colon k^n\to\M_n(k)$ denote the usual diagonal embedding: the
$k$-linear map sending $e_i\mapsto E_{ii}$. 

\begin{lemma}
\label{lem:emptypreimage}
Let $k$ be a ring, and let $n\geq 2$ and $1 \leq i \leq n$ be integers. With notation as above,
the diagram
\[
\xymatrix{
k^n \ar[r]^-{\pi_i} \ar[d]_-d & k \ar[d] \\
\M_n(k) \ar[r] & 0
}
\]
is a pushout in the category of rings.
\end{lemma}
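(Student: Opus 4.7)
The plan is to unwind the universal property of the pushout. Let $R$ be any ring and suppose $f \colon k \to R$, $g \colon \M_n(k) \to R$ are ring homomorphisms satisfying $f \circ \pi_i = g \circ d$; the goal is to produce a unique ring homomorphism $h \colon 0 \to R$ whose composites with the two arrows out of $0$ recover $f$ and $g$. Because any homomorphism out of the zero ring forces $1_R = 0_R$, the assertion collapses to the following statement: any such compatible pair $(f,g)$ must have $R = 0$. Once that is shown, $f$ and $g$ are automatically the zero maps and the required $h \colon 0 \to 0$ exists and is unique.

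To prove this reduction, first I would evaluate $g$ on the diagonal matrix units. For each $1 \leq j \leq n$, compatibility yields
\[
g(E_{jj}) = g(d(e_j)) = f(\pi_i(e_j)) = f(\delta_{ij} \cdot 1_k),
\]
which equals $1_R$ when $j = i$ and $0_R$ otherwise.

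Next, using the hypothesis $n \geq 2$, I would fix some $j \neq i$ and set $a = g(E_{ij})$, $b = g(E_{ji})$. The matrix unit identities $E_{ij} E_{ji} = E_{ii}$ and $E_{ji} E_{ij} = E_{jj}$, combined with the previous paragraph, give $ab = 1_R$ and $ba = 0_R$. Right-multiplying the first relation by $a$ yields $a \cdot (ba) = a$, but $ba = 0_R$ forces $a = 0_R$, so $1_R = ab = 0_R$ and $R$ is the zero ring.

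I do not anticipate a serious obstacle: the argument amounts to locating a pair of off-diagonal matrix units whose two products recover $E_{ii}$ and some $E_{jj}$ with $j \neq i$, which is precisely what $n \geq 2$ makes available. The most delicate point is the conceptual one of recognizing at the outset that ``the pushout equals $0$'' is really the statement ``the span $k \xleftarrow{\pi_i} k^n \xrightarrow{d} \M_n(k)$ admits no nontrivial cocone in $\Ring$.''
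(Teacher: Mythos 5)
Your proof is correct and follows essentially the same route as the paper: both reduce the pushout claim to showing that any ring $R$ receiving a compatible cocone on the span $k \xleftarrow{\pi_i} k^n \xrightarrow{d} \M_n(k)$ must be the zero ring, and both get this from matrix-unit identities showing that the image of a diagonal idempotent $E_{jj}$ with $j \neq i$ is zero yet forces $1_R = 0_R$. The only cosmetic difference is that the paper writes $1 = \sum_j E_{j\ell}E_{\ell\ell}E_{\ell j}$ to say $E_{\ell\ell}$ generates the unit ideal, whereas you use the single pair $E_{ij}, E_{ji}$ to produce $a,b \in R$ with $ab = 1_R$ and $ba = 0_R$.
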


\begin{proof}
Fix an integer $\ell \neq i$ with $1 \leq \ell \leq n$.
Suppose that $R$ is a ring with homomorphisms $f \colon \M_n(k) \to R$ and
$g \colon k \to R$ such that $f d = g \pi_i$. In the ring $\M_n(k)$, we have
$1 = \sum_{j=1}^n E_{j\ell}E_{\ell \ell}E_{\ell j}$, so that $d(e_\ell)=E_{\ell \ell}$
generates the unit ideal.
Applying $f$, it follows that $fd(e_\ell)$ generates the unit ideal in $R$. 
On the other hand $\pi_i(e_\ell)=0$, so that $g\pi_i(e_\ell)=0$.
Now the image of $e_\ell$ in $R$ both generates the unit ideal and is zero, so
$R = 0$ and the diagram above is a pushout.
\end{proof}

We will be concerned with the pullback diagram in $\Ringop$ that is opposite to the diagram
in Lemma~\ref{lem:emptypreimage}.
Let us speak intuitively about this ``picture'' in the case where $k = \complex$ and $n = 2$.
For the moment, we will write $\Spec(\M_2(\complex))$ for an imaginary geometric object
corresponding to the matrix ring $\M_2(\complex)$. The pullback diagram is the following:
\[
\begin{tikzpicture}[anchor=base,auto]
  \path 
    (0,0) node (Spec0) {$\Spec(0)$} +(4,0) node (Speck) {$\Spec(\complex)$}
        +(0,-2) node (SpecM2) {$\Spec(\M_2(\complex))$}
        +(4,-2) node (Speck2) {$\Spec(\complex) \coprod \Spec(\complex)$} +(5,-1) node (AR1) {}
    ++(7.5,0) node (empty) {$\varnothing$} +(2,0) node (point) {$\{*\}$} +(0,-2) node (weird) {$??$}
		+(2,-2) node (twopoints) {$\{*, \bullet\}$} +(-1,-1) node (AR2) {}
  ;
  \draw[->] (Spec0) edge (Speck) ;
  \draw[->] (Spec0) edge (SpecM2) ;
  \draw[->] (Speck) edge (Speck2) ;
  \draw[->] (SpecM2) edge (Speck2) ;
  
  \draw[->] (empty) edge (point) ;
  \draw[->] (empty) edge (weird) ;
  \draw[->] (point) edge (twopoints) ;
  \draw[->] (weird) edge (twopoints) ;

  \draw[decorate,decoration={snake},->] (AR1) -- (AR2);
\end{tikzpicture}
\]
The maps $\Spec(\complex) \to \Spec(\complex^2) = \Spec(\complex) \coprod \Spec(\complex)$
send the single point to one of two points. Traditionally, the pullback diagram above gives the
fiber of the ``space'' $\Spec(\M_2(\complex))$ over either point.
Lemma~\ref{lem:emptypreimage} implies that both of these fibers are empty.  In other words, we
can imagine that $\Spec(\M_2(\complex))$ maps to the two-point space $\Spec(\complex^2)$,
``without hitting either point.''

Of course, the intuitive discussion above does not constitute rigorous mathematics. But the
remainder of this paper can be viewed as providing two different ways to turn these ideas into
precise results that obstruct certain approaches to realizing the category $\Ringop$ as a category
of ``spaces.''

The following remark, related to the triviality of the pushout of diagram above,
was kindly communicated to us by Luisa Fiorot: fibered coproducts in the category
$\cRing$ (which are given by tensor products) commute with finite products, but
Lemma~\ref{lem:emptypreimage} shows that fibered coproducts in the category
$\Ring$ (``amalgamated free products'') do not similarly commute with products. A
noncommutative algebraist may see in this the difference between idempotents in
commutative versus noncommutative rings: an idempotent in a commutative ring
``neatly splits'' the ring into a direct product of two commutative rings, while a
non-central idempotent in a noncommutative ring leads only to a Pierce corner
decomposition which is not a direct product of rings.

\section{The first obstruction: sheaves on a site}
\label{sec:coverage}

We refer the reader to~\cite[C.2.1]{Johnstone:elephant2} for a thorough account of
the theory of coverages and sheaves. 

Let $\C$ be a category with pullbacks. A \emph{coverage} $J$ on $\C$ is a rule assigning
to every object $X\in\C$ a collection $J(X)$ of sets of morphisms in $\C$ with codomain
$X$ (called ``covering families'') subject to the property:
\begin{itemize}
\item If $\{f_i \colon U_i \to X\} \in J(X)$ is a covering family and $g \colon Y \to X$
is a morphism in $\C$, then the family of pullbacks $\{g \times_X f_i \colon Y \times_X U_i \to Y\}$
is a covering family. 
\end{itemize}
A \emph{site} is a pair $(\C, J)$ where $\C$ is a category and $J$ is a coverage on $\C$.

(In the literature, a coverage is often defined by a weaker condition that can be
stated even if $\C$ does not have pullbacks~\cite[Definition~C.2.1.1]{Johnstone:elephant2}.
However, for categories with pullbacks such as $\cRingop$ or $\Ringop$, the sheaves on a
site $(\C, J)$ remain unchanged if $J$ is enlarged to be ``stable under pullback'' in the sense
above; this follows from~\cite[Lemma~C.2.1.6(i)]{Johnstone:elephant2}.)

\begin{example}
The relevant example for us is the \emph{big Zariski site} (see~\cite[VIII.6]{MacLaneMoerdijk}
and~\cite[Ex.~2.30]{Vistoli}). This is the site $(\cRingop,J)$
where for a commutative ring $R \in \cRingop$, the family $J(R)$ consists (up to isomorphism)
of all families opposite to those of the form
\[
\{R \to R[r_i^{-1}] \mid r_1, \dots, r_n \in R \mbox{ and } \sum r_i R = R\}.
\]
(Stated geometrically, these are the collections of open immersions onto distinguished open
subschemes of $\Spec(R)$ which collectively cover the space.) These families are stable
under pullback in $\cRingop$ because, for any homomorphism $g \colon R \to S$ in $\cRing$,
the corresponding pushout diagram in $\cRing$ is
\[
\xymatrix{
S[g(r_i)^{-1}] & R[r_i^{-1}] \ar[l] \\
S \ar[u] & R \ar[l]_g \ar[u]
}
\]
where $\sum r_i R = R$ implies that $\sum g(r_i) S = S$. We will refer to this coverage
as the \emph{Zariski coverage}.
\end{example}

For another example, a very special form of coverage used in algebraic geometry is a
\emph{Grothendieck topology}; see~\cite[Ch.~III]{MacLaneMoerdijk} and~\cite{Vistoli}.
While we will not recall the definition, a Grothendieck topology is a coverage that is closed
under certain saturation conditions. Each coverage ``generates'' a Grothendieck
topology that has the same sheaves as the original coverage;
see~\cite[Proposition~C.2.1.9]{Johnstone:elephant2}.

The primary purpose of a coverage is to define sheaves on the corresponding site. 
A \emph{sheaf} on a site $(\C,J)$ is a functor $F \colon \C\op \to \Set$ such that,
for every $X \in \C$ and every covering family $\{U_i \to X\} \in J(X)$, the
following diagram of sets is an equalizer for all $i$ and $j$:
\[
F(X) \to \prod_i F(U_i) \mathop{\rightrightarrows}^{F(p_1)}_{F(p_2)} F(U_i \times_X U_j),
\]
where $p_1 \colon U_i \times_X U_j \to U_i$ and $p_2 \colon U_i \times_X U_j \to U_j$
are the two ``projections'' from the pullbacks.

Recall that a coverage $J$ on a category $\C$ is called \emph{subcanonical} if, for all
$X \in \C$, the representable functor $\Hom_\C(-,X) \colon \C\op \to \Set$ is a sheaf for $J$. 
The important example for us is that \emph{the Zariski coverage on $\cRingop$ is 
subcanonical}~\cite[2.3.6]{Vistoli}.

\separate

We arrive at the first major result. It states that any coverage on $\Ringop$ which mildly
attempts to extend the Zariski coverage on $\cRingop$ will fail to distinguish matrix rings
from the zero ring.

We denote the pushout (or fibered coproduct) of two ring homomorphisms $R \to S$
and $R \to T$ by $S \ast_R T$.  This is a sort of ``amalgamated free product'' of $S$
and $T$ relative to $R$. If $k$ is a ring, then a \emph{ring over $k$} is a ring $R$
equipped with a homomorphism $k \to R$.

\begin{theorem}
\label{thm:confused sheaves}
Let $k$ be a ring and $n\geq 2$ be an integer. Let $J$ be a coverage on
the category $\Ringop$ for which the opposite of the family $\{\pi_i\colon k^n\to k \mid i=1,\dots,n\}$
is a covering family. 
Then for every ring $R$ over $k$, the opposite of the singleton family
$\{\M_n(R) \to 0\}$ is a covering family for $J$, and for any sheaf $F$ on the site
$(\Ringop,J)$, this morphism induces an isomorphism of sets $F(\M_n(R)) \cong F(0)$.
\end{theorem}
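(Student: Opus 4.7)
My plan is to use the stability axiom of the coverage $J$ to produce the singleton cover $\{0 \to \M_n(R)\}$ by pulling back the given cover, and then to extract the desired isomorphism from the sheaf axiom applied to that singleton. The engine of the argument is an $R$-valued analogue of Lemma~\ref{lem:emptypreimage}.

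Specifically, I would first verify that for any ring $R$ equipped with a homomorphism from $k$ and any index $1 \leq i \leq n$, the square
\[
\xymatrix{
k^n \ar[r]^-{\pi_i} \ar[d] & k \ar[d] \\
\M_n(R) \ar[r] & 0
}
\]
is a pushout in $\Ring$, where the left vertical arrow is the composite $k^n \to \M_n(k) \to \M_n(R)$ sending $e_j \mapsto E_{jj}$. The proof copies the one in Lemma~\ref{lem:emptypreimage} line by line: picking any $\ell \neq i$, the identity $1 = \sum_j E_{j\ell} E_{\ell\ell} E_{\ell j}$ still holds in $\M_n(R)$, so $E_{\ell\ell}$ generates the unit ideal there, while $\pi_i(e_\ell) = 0$, forcing any ring compatibly receiving both maps to be the zero ring. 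Passing to $\Ringop$, this square becomes a pullback, and the stability axiom then allows me to pull back the given covering family $\{\pi_i \colon k \to k^n\}_{i=1}^n$ in $\Ringop$ along the morphism $\M_n(R) \to k^n$ (i.e., the ring map displayed above). Each of the $n$ pulled-back terms is the unique morphism $0 \to \M_n(R)$ in $\Ringop$, so as a set of morphisms this family is the singleton $\{0 \to \M_n(R)\}$---the opposite of $\{\M_n(R) \to 0\}$, as claimed.

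For the final step, the sheaf axiom applied to this singleton cover gives an equalizer
\[
F(\M_n(R)) \to F(0) \rightrightarrows F(0 \times_{\M_n(R)} 0).
\]
I compute $0 \times_{\M_n(R)} 0$ in $\Ringop$ as the pushout in $\Ring$ of two copies of $\M_n(R) \to 0$, which is $0$ since the zero ring is the only ring admitting a ring homomorphism out of $0$. Both projections then coincide with $\id_0$, so the two parallel maps in the equalizer are $\id_{F(0)}$, forcing $F(\M_n(R)) \to F(0)$ to be a bijection. The mildest subtlety I anticipate is the bookkeeping that identifies the $n$-fold indexed pulled-back family with the singleton in the theorem statement; the remainder is mechanical use of the coverage axiom together with the pushout computation inherited from Lemma~\ref{lem:emptypreimage}.
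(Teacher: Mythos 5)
Your proof is correct and follows essentially the same route as the paper: the triviality of the relevant pushout combined with the coverage (pullback-stability) axiom makes the opposite of $\{\M_n(R)\to 0\}$ a covering family, and the sheaf axiom applied to this singleton cover (with $0\ast_{\M_n(R)}0=0$ and the two parallel maps both equal to $F$ of the unique map $0\to 0$) yields the bijection $F(\M_n(R))\cong F(0)$. The only difference is cosmetic: you pull back $\{\pi_i\}$ in a single step along the composite $k^n\to\M_n(k)\to\M_n(R)$ after re-running the argument of Lemma~\ref{lem:emptypreimage} over $R$, whereas the paper pulls back first along $d\colon k^n\to\M_n(k)$ (using Lemma~\ref{lem:emptypreimage} verbatim) and then along the induced map $\M_n(k)\to\M_n(R)$.
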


\begin{proof}
By the definition of a coverage, the pullbacks in $\Ringop$ (i.e., the pushouts in $\Ring$)
of the maps $\pi_i$ along the diagonal map $k^n \to\M_n(k)$ must form a covering family
for $J$. But by Lemma~\ref{lem:emptypreimage}, these pushouts are all zero maps
$\M_n(k)\to 0$. Thus the opposite family of $\{\M_n(k) \to 0\}$ is a covering family.
The homomorphism $k \to R$ induces a homomorphism $g \colon \M_n(k) \to \M_n(R)$,
and of course the pushout of the morphism $\M_n(k) \to 0$ along $g$ is 
$\M_n(R) \to 0$. As this is a pullback in $\Ringop$, the coverage axiom implies
that $\{\M_n(R) \to 0\}$ is a covering family for $J$.

Let $F$ be a sheaf on $(\Ringop, J)$; in particular, $F$ is a functor $\Ring = (\Ringop)\op \to \Set$.
The fibered coproduct $0 \ast_S 0$ in $\Ring$ for any ring $S$ (relative to the unique
homomorphism $S \to 0$) is easily seen to be zero.
Now the sheaf axiom declares that the following diagram must be an equalizer:
\[
F(\M_n(R)) \to F(0) \rightrightarrows F(0 \ast_{\M_n(R)} 0) = F(0).
\]
The two arrows $F(0) \to F(0)$ are equal because they are both the image under $F$ of the
unique ring homomorphism $0 \to 0$.
Thus the equalizer of these arrows is $F(0)$ itself, meaning that $F(\M_n(R)) \cong F(0)$.
\end{proof}

Suppose that $(\C, J)$ and $(\D, K)$ are sites and that $F \colon C \to D$ is a functor.
We say that $F$ \emph{preserves covering families} if, for every covering family
$\{f_i \colon U_i \to X\}$ for $(\C,J)$, 
the set $\{F(f_i) \colon F(U_i) \to F(X)\}$ is a covering family for $(\D,K)$ .

\begin{corollary}
\label{cor:no coverage}
There is no coverage $J$ on $\Ringop$ such that the functor
\[
F = \Hom_{\Ringop}(-,0) = \Hom_{\Ring}(0,-)\colon\Ringop\to\Set
\]
is a sheaf on $(\Ringop, J)$ and the inclusion $\cRingop \hookrightarrow \Ringop$
preserves Zariski covering families.
\end{corollary}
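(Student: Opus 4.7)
The plan is to derive Corollary~\ref{cor:no coverage} from Theorem~\ref{thm:confused sheaves} by a short contradiction argument. Suppose such a coverage $J$ exists. I would fix $k = \Z$ and $n = 2$ and seek a contradiction by computing $F$ on $\M_2(\Z)$ and on the zero ring.

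First I would verify that the opposite of the family $\{\pi_i \colon \Z^2 \to \Z\}_{i=1,2}$ is a Zariski covering family in $\cRingop$. The orthogonal idempotents $e_1, e_2 \in \Z^2$ satisfy $e_1 + e_2 = 1$, so they generate the unit ideal, and inverting each $e_i$ identifies the localization $\Z^2[e_i^{-1}]$ with $\Z$ in such a way that the localization map $\Z^2 \to \Z^2[e_i^{-1}]$ becomes $\pi_i$. Hence this family matches one of the standard generators of the Zariski coverage. By the hypothesis that $\cRingop \hookrightarrow \Ringop$ preserves Zariski covering families, its opposite is also a covering family for $(\Ringop, J)$.

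Now I would apply Theorem~\ref{thm:confused sheaves} with $k = R = \Z$ to the presumed sheaf $F$, obtaining an isomorphism of sets $F(\M_2(\Z)) \cong F(0)$. To finish, I would simply compute both sides. By definition $F(S) = \Hom_\Ring(0, S)$, and any unital ring homomorphism $0 \to S$ must send $1_0 = 0_0$ to both $1_S$ and $0_S$, forcing $S = 0$. Therefore $F(0)$ is a singleton while $F(\M_2(\Z)) = \varnothing$, a contradiction.

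The substantive work has been absorbed into Theorem~\ref{thm:confused sheaves}, so there is no real obstacle; the proof amounts to unpacking definitions. The only point meriting a moment of care is the identification of $\{\pi_i \colon \Z^2 \to \Z\}$ with a standard Zariski cover, which is just the familiar fact that $\Spec(\Z^2)$ is the disjoint union of two copies of $\Spec(\Z)$.
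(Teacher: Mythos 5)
Your proposal is correct and follows essentially the same route as the paper: identify the projection family $\{\pi_i\colon k^2\to k\}$ (the paper uses a field, you use $\Z$; either works) as a Zariski covering family, transfer it to $(\Ringop,J)$ by hypothesis, apply Theorem~\ref{thm:confused sheaves} to get $F(\M_2(k))\cong F(0)$, and contradict this by noting $\Hom_\Ring(0,0)$ is a singleton while $\Hom_\Ring(0,\M_2(k))$ is empty. Your explicit identification of $\pi_i$ with the localization $k^2\to k^2[e_i^{-1}]$ is a nice touch that the paper only states geometrically, but it is not a different argument.
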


\begin{proof}
Assume for contradiction that such a subcanonical coverage $J$ exists. 
For any field $k$, the projections $\pi_i\colon k\times k\to k$ ($i=1,2$) together form a covering
family on the big Zariski site; on the level of schemes, these morphisms correspond to the open immersions
$\Spec(k)\to\Spec(k\times k)=\Spec(k)\coprod\Spec(k)$ mapping the unique point of $\Spec(k)$
onto either of the two points of $\Spec(k\times k)$. 
By hypothesis, this family must also cover $k\times k$ in the topology $J$ on $\Ringop$.
So by Theorem~\ref{thm:confused sheaves}, the sheaf $F$ assigns isomorphic sets
to $0$ and $\M_2(k)$. But this contradicts the fact that $\Hom_{\Ring}(0,0)$ is a singleton and
$\Hom_{\Ring}(0,\M_2(k))$ is empty.
\end{proof}

(One could prove slightly stronger statements of Theorem~\ref{thm:confused sheaves}
and Corollary~\ref{cor:no coverage} which assume only that the coverage $J$ on $\Ringop$
has a covering family which ``refines'' the family $\{\pi_i \colon k^n \to k\}$, or respectively
that $J$ refines all Zariski covering families. This would make use
of~\cite[Lemma~C.2.1.6(i)]{Johnstone:elephant2}.
We have fixed our definitions and stated our results in order to keep the exposition as
self-contained as possible.)

\separate

We obtain Theorem~\ref{thm:no coverage} as a special case of the preceding result.

\begin{proof}[Proof of Theorem~\ref{thm:no coverage}]
If $J$ is a subcanonical coverage on $\Ringop$, then the functor 
\[
\Hom_{\Ringop}(-,0)\colon\Ringop\to\Set
\]
is a sheaf for $(\Ringop, J)$. The claim immediately follows from Corollary~\ref{cor:no coverage}.
\end{proof}

The results proved above do \emph{not} indicate that every attempt to ``do geometry'' with the category
$\Ringop$ is futile. To the contrary, such approaches to noncommutative geometry have been
developed in~\cite{KontsevichRosenberg} and~\cite{Orlov}.
These approaches all happen to work with analogues (not generalizations) of phenomena from commutative
algebraic geometry, and deal with \emph{pre}sheaves of rings on the category $\Ringop$.
The results presented here simply show that such indirect approaches are necessary.

As each scheme $X$ defines a sheaf $\Hom(\Spec(-),X) \colon (\cRingop)\op \to \Set$ on the
big Zariski site, we see that the obstruction in Theorem~\ref{thm:no coverage} expresses, at least in
part, the difficulty in producing a notion of ``gluing'' of noncommutative spaces.

To close this section, we share an observation communicated to us by Benno van den Berg. 
It is possible to define sheaves even in the case when covering families are not stable under pullback
(and therefore do not form a coverage in the traditional sense).
For example, see the discussion surrounding~\cite[Example~C.2.1.13]{Johnstone:elephant2}.
The resulting categories of sheaves are generally much less well-behaved.
We do not know whether the obstructions proved in this section persist or can be avoided if one
works with such a generalized notion of sheaf on a category.

\section{The second obstruction: sheaves of rings}
\label{sec:sheaves of rings}

To present the final no-go result, let us imagine the most utopian setting for noncommutative
geometry.
There should be a category of noncommutative topological spaces.  (The need for such a category,
extending the usual category of ``commutative'' topological spaces, is suggested by the no-go results
presented in~\cite{Reyes} and~\cite{BergHeunen:extending}.)
Given a noncommutative topological space $X$, one would wish to have some collection of sheaves
on that space. Even in the event that there is some obstruction to general categories of ``sheaves
of sets on $X$'' in the fully noncommutative setting (see Question~\ref{q:obstruction}), one would
at least hope that there is a suitable category $\ShR(X)$ of ``sheaves of rings on $X$.''
Of course, there should be a global sections functor $\Gamma(X,-) \colon \ShR(X) \to \Ring$. 
Given a morphism of noncommutative spaces $f \colon X \to Y$
and a sheaf of rings $\O_X$ on $X$, one would wish for a ``direct image sheaf'' $f_* \O_Y$ that is a sheaf
of rings on $X$. 

In case $f \colon X \to Y$ is a continuous function between honest (``commutative'') topological
spaces, the direct image sheaf $f_* \O_X$ is defined by $f_* \O_X(U) = \O_X(f^{-1}(U))$ for each
open set $U \subseteq Y$. Thus, by the very definition of the direct image, $\Gamma(Y, f_* \O_X) = \Gamma(X, \O_X)$.
That is, \emph{direct images preserve global sections.} 

\begin{remark}
\label{rem:universe}
We will frequently refer to categories of ``large'' categories, whose morphisms are
functors and therefore might allow for ``large'' hom-sets. This is safely handled using
a ``second-order universe'' as described, for instance, in~\cite[\S 6.4]{Bergman}.
To be explicit, assume a Grothendieck universe $U_1$ and let $\Set$ be the category of
sets that are elements of $U_1$. Assume there is another universe $U_2$ such that
$U_1 \in U_2$.
Define $\Cat$ to be the category of all categories $\C$ such that $\Obj(\C) \in U_2$,
with functors for morphisms. (In particular, the category $\Set$ of ``small'' sets will be
an object of $\Cat$.) Then every ``category of categories'' we consider will be
a subcategory of $\Cat$.
These technicalities do not pose a serious issue, and we will mostly ignore size 
considerations in the rest of this paper.
\end{remark}

The following axioms express the minimal requirements for a category $\C$ to behave as a
category of ``ringed noncommutative spaces'' for which morphisms have ``direct images
of sheaves of rings that preserve global sections.''

\begin{hypothesis}
\label{hyp:category}
In this section, we will consider categories $\C$ satisfying the following properties:
\begin{itemize}
\item[(A)] Every object $X \in \C$ is a $3$-tuple $X = (\ShR(X), \O_X, \Gamma(X,-))$ such that:
  \begin{itemize}
  \item[(A1)] $\ShR(X)$ is a category;
  \item[(A3)] $\O_X$ is an object of $\ShR(X)$;
  \item[(A2)] $\Gamma(X,-) \colon \ShR(X) \to \Ring$ is a functor.
  \end{itemize}
\item[(B)] Every morphism $f \colon X \to Y$ in $\C$ is an ordered pair $f = (f_*, \underline{f})$ such that:
  \begin{itemize}
  \item[(B1)] $f_* \colon \ShR(X) \to \ShR(Y)$ is a functor satisfying 
    $\Gamma(Y,f_*(\O_X)) \cong \Gamma(X,\O_X)$;
  \item[(B2)] $\underline{f} \colon \O_Y \to f_* \O_X$ is a morphism in $\ShR(Y)$.
  \end{itemize}
\end{itemize}
\end{hypothesis}

The prototypical example of a triple satisfying axiom (A) is $(\ShRing(X), \O_X, \Gamma(X,-))$
where $X$ is a topological space, $\ShRing(X)$ is the category of sheaves of rings on $X$,
$\O_X \in \ShRing(X)$ is a particular sheaf of rings, and $\Gamma(X,-) \colon \ShRing(X) \to \Ring$ is
the global sections functor. We will return to this example (in much broader generality) when
we present the proof of Theorem~\ref{thm:ringed category} later in this section.

In the following, we will view the composite of two functors $F \colon \C_1\op \to \C_2$
and $G \colon \C_2\op \to \C_3$ as a \emph{covariant} functor $G \circ F \colon \C_1 \to \C_3$.

\begin{hypothesis}
\label{hyp:functor}
Let $\C$ be a category satisfying Hypothesis~\ref{hyp:category}. We will consider functors
$\Sp \colon \Ringop \to \C$ with the following properties:
\begin{itemize}
\item[(A)] The restriction of $\Sp$ to $\cRingop$ is the functor
\[
R \mapsto (\ShRing(\Spec(R)), \O_{\Spec(R)}, \Gamma(\Spec(R),-)).
\]
\item[(B)] There is a natural transformation $\eta \colon \id_\Ring \to \Gamma(\Sp(-),\O_{\Sp(-)})$
of endofunctors of $\Ring$ whose restriction to $\cRing$ is the isomorphism of $\id_\cRing$ with
$\Gamma(\Spec(-), \O_{\Spec(-)})$.
(That is, $\eta$ restricts to the unit of the adjunction between the $\Spec$ and global
sections functors).
\end{itemize}
\end{hypothesis}

For convenience, the following lemma is separated out from the proof of
Theorem~\ref{thm:zerosections}.
We continue to use the notation introduced in Section~\ref{sec:diagram}. 

\begin{lemma}
\label{lem:zerosheaf}
Let $k$ be a commutative ring, and let $\O$ be a sheaf of rings on the topological space
$\Spec(k^n)$ with a morphism of sheaves of rings $\O_{\Spec k^n} \to \O$
such that the induced map on global sections factors through the diagonal morphism
$d \colon k^n \hookrightarrow \M_n(k)$, as follows:
\[
\Gamma(\Spec(k^n), \O_{\Spec(k^n)}) \cong k^n \overset{d}{\longrightarrow} \M_n(k)
\to \Gamma(\Spec(k^n), \O).
\]
Then $\O$ is the zero sheaf of rings.
\end{lemma}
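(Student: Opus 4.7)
The plan is to exploit the fact that the idempotents $e_i \in k^n$ induce a clopen topological decomposition $\Spec(k^n) = \bigsqcup_{i=1}^n U_i$, with each $U_i$ canonically homeomorphic to $\Spec(k)$, and then to reduce the claim to Lemma~\ref{lem:emptypreimage} on each piece of the decomposition.

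First I would verify that restricting the hypothesized morphism $\O_{\Spec(k^n)} \to \O$ to each open $U_i$ yields a commutative diagram in $\Ring$ of the form
\[
\xymatrix{
k^n \ar[r]^-{d} \ar[d]_-{\pi_i} & \M_n(k) \ar[d] \\
k \ar[r] & \Gamma(U_i, \O|_{U_i}).
}
\]
The top row is supplied by the assumed factorization of the global sections map through $d$. The left column arises from the identification $(k^n)[e_i^{-1}] \cong k$, under which the distinguished open $U_i \subseteq \Spec(k^n)$ corresponds to $\Spec(k)$ and the restriction map on global sections of $\O_{\Spec(k^n)}$ is exactly $\pi_i$. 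The square commutes by naturality of restriction of sections combined with the hypothesized factorization.

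By Lemma~\ref{lem:emptypreimage}, this square fits into the pushout whose fourth corner is the zero ring, so $\Gamma(U_i, \O|_{U_i}) = 0$ for every $i$. In particular, the multiplicative identity $1 \in \Gamma(U_i, \O|_{U_i})$ equals $0$.

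To finish, I would propagate this vanishing from each $U_i$ to every open subset of $X = \Spec(k^n)$. Since each open $V \subseteq X$ satisfies $V = \bigsqcup_i (V \cap U_i)$, and restriction maps in a sheaf of rings preserve the multiplicative identity, we get $1 = 0$ in $\Gamma(V \cap U_i, \O)$ for all $i$, whence $\Gamma(V, \O) = \prod_i \Gamma(V \cap U_i, \O) = 0$. Therefore $\O$ is the zero sheaf of rings. The main obstacle, such as it is, lies in justifying the commutativity of the key diagram, and specifically in identifying the restriction $\Gamma(\Spec(k^n), \O_{\Spec(k^n)}) \to \Gamma(U_i, \O_{\Spec(k^n)}|_{U_i})$ with the projection $\pi_i$ in a way that is compatible with the morphism $\O_{\Spec(k^n)} \to \O$. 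These are standard facts about the Zariski spectrum of a product ring, but they deserve care because they are precisely the bridge between the combinatorial content of Lemma~\ref{lem:emptypreimage} and the sheaf-theoretic statement of the present lemma.
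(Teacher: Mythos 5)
Your proposal is correct and follows essentially the same route as the paper: the same clopen decomposition $\Spec(k^n)=\coprod_i U_i$ induced by the projections $\pi_i$, the same commutative square reducing to Lemma~\ref{lem:emptypreimage} to get $\O(U_i)=0$, and then propagation to all opens using that restriction maps of a sheaf of rings are unital together with the sheaf axiom for disjoint covers. The only (immaterial) difference is bookkeeping: the paper first glues to get $\O(X)=0$ and then restricts to an arbitrary open, while you decompose each open $V$ as $\coprod_i(V\cap U_i)$ directly.
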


\begin{proof}
Let $U_j$ be the clopen subset of $X = \Spec k^n$ whose embedding $U_j \hookrightarrow X$ corresponds
to the projection $k^n \to k$ onto the $j$th factor, so that $X = \coprod_{j=1}^n U_j$.
Then for each $j$ the diagram
\[
\xymatrix{
k^n \ar@{=}[r] \ar[d]_{\pi_j} & \O_{\Spec k^n}(X) \ar@{^{(}->}[r]^-d \ar[d] & \M_n(k) \ar[r] & \O(X) \ar[d] \\
k \ar@{=}[r] & \O_{\Spec k^n}(U_j) \ar[rr] & & \O(U_j)
}
\]
is commutative.
It follows from Lemma~\ref{lem:emptypreimage} that $\O(U_j) = 0$. Because the open sets $U_j$ cover
$X$, it follows from the gluing axiom for sheaves that $\O(X) = 0$.
The fact that $\O$ is a sheaf of rings means that its restriction maps are ring
homomorphisms. So for all open subsets $U$ of $\Spec k^n$, the existence of a ring homomorphism
$0 = \O(X) \to \O(U)$ via restriction to $U$ implies that $\O(U) = 0$. Hence $\O$ is the zero sheaf.
\end{proof}

\begin{theorem}
\label{thm:zerosections}
Let $\C$ be a category satisfying Hypothesis~\ref{hyp:category} with a functor
$\Sp \colon \Ringop \to \C$ satisfying Hypothesis~\ref{hyp:functor}. 
Let $k$ be any ring and $n \geq 2$ an integer. Then
\[
\Gamma(\Sp(\M_n(k)), \O_{\Sp(\M_n(k))}) = 0.
\]
In particular, if $k$ is nonzero then $\Gamma(\Sp(\M_n(k)), \O_{\Sp(\M_n(k))}) \ncong \M_n(k)$.
\end{theorem}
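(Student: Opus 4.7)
The plan is to apply $\Sp$ to the diagonal embedding $d \colon k^n \hookrightarrow \M_n(k)$ of Section~\ref{sec:diagram} and then use naturality of $\eta$ to reduce to Lemma~\ref{lem:zerosheaf}. Write $Y = \Sp(k^n)$, $X = \Sp(\M_n(k))$, and $f = \Sp(d) \colon X \to Y$, with components $(f_*, \underline{f})$ as in Hypothesis~\ref{hyp:category}(B). Since $k^n$ is commutative, Hypothesis~\ref{hyp:functor}(A) identifies $Y$ with the classical triple $(\ShRing(\Spec(k^n)), \O_{\Spec(k^n)}, \Gamma(\Spec(k^n),-))$; in particular, $f_*\O_X$ is an object of $\ShRing(\Spec(k^n))$, and $\underline{f}$ is a morphism $\O_{\Spec(k^n)} \to f_*\O_X$ in that category.

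Next I would apply naturality of $\eta$ to the homomorphism $d$. This yields a commutative square with top edge $d \colon k^n \to \M_n(k)$, vertical edges $\eta_{k^n}$ and $\eta_{\M_n(k)}$, and bottom edge the ring map $\Gamma(Y, \O_Y) \to \Gamma(X, \O_X)$ induced by $f$. Unwinding the definition of this functoriality, the bottom edge is $\Gamma(Y, \underline{f})$ followed by the isomorphism $\Gamma(Y, f_*\O_X) \cong \Gamma(X, \O_X)$ of (B1); and $\eta_{k^n}$ is, by hypothesis, the canonical isomorphism $k^n \cong \Gamma(\Spec(k^n), \O_{\Spec(k^n)})$. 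Chasing these identifications around the square exhibits $\Gamma(\Spec(k^n), \underline{f})$ as factoring through the diagonal $d \colon k^n \hookrightarrow \M_n(k)$, which is exactly what Lemma~\ref{lem:zerosheaf} requires.

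With the diagram chase done, Lemma~\ref{lem:zerosheaf} forces $f_*\O_X$ to be the zero sheaf of rings on $\Spec(k^n)$, and the (B1) isomorphism then gives $\Gamma(X, \O_X) \cong \Gamma(Y, f_*\O_X) = 0$, which is the stated vanishing. The final ``in particular'' is then immediate, since $\M_n(k)$ is nonzero whenever $k$ is.

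The only real obstacle I anticipate is the bookkeeping: one has to thread several isomorphisms --- the values of $\eta$ on $k^n$ and $\M_n(k)$, the (B1) identification $\Gamma(Y, f_*\O_X) \cong \Gamma(X, \O_X)$, and the Hypothesis~\ref{hyp:functor}(A) identification of $\Sp(k^n)$ with its classical counterpart --- around the naturality square for $\eta$ at $d$ so that the composite lands exactly in the factorization demanded by Lemma~\ref{lem:zerosheaf}. No new ring-theoretic input is required beyond the vanishing already extracted in Lemmas~\ref{lem:emptypreimage} and~\ref{lem:zerosheaf}.
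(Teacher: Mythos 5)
Your proof takes essentially the same route as the paper's --- apply $\Sp$ to the diagonal $d$, use naturality of $\eta$ to set up the hypothesis of Lemma~\ref{lem:zerosheaf}, and then conclude via condition~(B1) --- but it has a real gap at the very first step. You write ``Since $k^n$ is commutative, Hypothesis~\ref{hyp:functor}(A) identifies $Y$ with the classical triple,'' but the theorem allows $k$ to be \emph{any} ring, and if $k$ is noncommutative then $k^n$ is not commutative. Hypothesis~\ref{hyp:functor}(A) only constrains $\Sp$ on $\cRingop$, so for a noncommutative $k$ you have no information about $\Sp(k^n)$ at all, and in particular no way to say that $f_*\O_X$ is a sheaf of rings on the topological space $\Spec(k^n)$. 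Lemma~\ref{lem:zerosheaf} also explicitly requires $k$ to be commutative.

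The paper avoids this by first reducing to $k = \Z$: the unique homomorphism $\Z \to k$ induces a ring map $\Gamma(\Sp(\M_n(\Z)),\O_{\Sp(\M_n(\Z))}) \to \Gamma(\Sp(\M_n(k)),\O_{\Sp(\M_n(k))})$, so once the source is the zero ring the target must be as well. After that reduction, $\Z^n$ is genuinely commutative, Hypothesis~\ref{hyp:functor}(A) applies to $\Sp(\Z^n)$, Lemma~\ref{lem:zerosheaf} is available, and the rest of your argument goes through verbatim. So the fix is easy, but as written the proof breaks exactly when $k$ is noncommutative --- which is the case of interest.
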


\begin{proof}
It suffices to consider the case $k = \Z$. For, given any ring $k$, the unique homomorphism
$\Z \to k$ induces a homomorphism
\[
\Gamma(\Sp(\M_n(\Z)), \O_{\Sp(\M_n(\Z))}) \to \Gamma(\Sp(\M_n(k)), \O_{\Sp(\M_n(k))}).
\]
If the ring on the left is zero, then the ring on the right must also be zero.

Recall the notation for the diagonal morphism $d \colon \Z^n \hookrightarrow \M_n(\Z)$.
In the category $\C$, there is a resulting morphism $f = \Sp(d) \colon \Sp(\M_n(\Z)) \to \Sp(\Z^n)$.
This morphism consists of a functor $f_* \colon \ShR(\Sp(\M_n(\Z))) \to \ShR(\Sp(\Z^n)) = \ShRing(\Spec(\Z^n))$
along with a morphism $\underline{f} \colon \O_{\Sp(\Z^n)} = \O_{\Spec(\Z^n)} \to f_* \O_{\Sp(\M_n(\Z))}$
in the category $\ShRing(\Spec(\Z^n))$.

Write $X = \Sp(\M_n(\Z))$. 
Property~(B) of Hypothesis~\ref{hyp:functor} provides the following commutative diagram:
\[
\xymatrix{
\Z^n\  \ar@{=}[d] \ar@{^{(}->}[rr]^-d & & \M_n(k) \ar[d] \\
\Gamma(\Spec(\Z^n),\O_{\Spec \Z^n}) \ar[r]
& \Gamma(\Spec(\Z^n), f_*\O_X) \ar@{=}[r] &
\Gamma(X, \O_X) 
}
\]
where the vertical arrows are the components of the natural transformation $\eta$, and the
isomorphism $\Z^n \cong \Gamma(\Spec(\Z^n), \O_{\Spec(\Z^n)})$ coincides with the one
provided by the duality between commutative rings and affine schemes.
It follows from Lemma~\ref{lem:zerosheaf} that $f_* \O_{\Sp(\M_n(\Z))}$ is the zero sheaf.
From condition~(B1) of Hypothesis~\ref{hyp:category} we conclude that
\[
\Gamma(\Sp(\M_n(\Z)), \O_{\Sp(\M_n(\Z))}) = \Gamma(\Spec(\Z^n), f_* \O_{\Sp(\M_n(\Z))}) = 0. \qedhere
\]
\end{proof}

The ``preservation of global sections'' in Hypothesis~\ref{hyp:category}(B1) plays a crucial role
in the proof above. We do not know what can be said if that hypothesis is omitted or weakened
(or similarly, if the analogous hypothesis on $\fpCat$ below is omitted). The latter part of
Question~\ref{q:obstruction} is related to this.

\separate

A rather general category satisfying Hypothesis~\ref{hyp:category} can be obtained from
\emph{ringed categories}, which we now proceed to define.
Let $\X$ be a category with finite products. In particular, $\X$ has a terminal object $1_\X$, which
is the product indexed by the empty set. 
We define the \emph{global sections} functor $\Gamma(\X,-) \colon \X \to \Set$ to be the
hom-functor $\Gamma(\X,-) = \Hom_\X(1_\X,-)$.
(This agrees with the usual global sections functor in case $\X $ is a topos, such as the category
of sheaves on a topological space or locale~\cite[p~135]{MacLaneMoerdijk}.)

A \emph{ring object} of $\X$ is an object $\O \in \X$ that is equipped with ``zero,'' ``unity,''
``addition,'' ``subtraction,'' and ``multiplication'' morphisms
\[
0,1 \in \Hom_\X(1_\X ,\O), \quad +,\bullet \in \Hom_\X(\O \times \O , \O), \quad 
 \mbox{and} \quad - \in \Hom(\O,\O),
\]
such that the usual axioms for (associative, unital) rings hold when expressed as commuting
diagrams in $\X$ between products of copies of $\O$ (see~\cite[VIII..5]{MacLaneMoerdijk}).
A morphism of ring objects $\O_1$ and $\O_2$ in $\X$ is a morphism $\O_1 \to \O_2$ in
$\X$ such that the induced diagrams relating the zero, unity, addition, subtraction,
and multiplication of $\O_1$ and $\O_2$ are commutative. 

If $\X$ is a category with finite products, we write $\RingObj(\X)$ for the category of ring
objects in $\X$.
The structure of a ring object $\O \in \RingObj(\X)$ naturally induces a ring structure on
the global sections $\Hom_\X(A,\O)$ for any $A \in \X$ (essentially due to the fact that
the functor $\Hom_\X(A,-) \colon \X \to \Set$ preserves products).
In this way, the global sections functor $\Gamma(\X,-) = \Hom_\X(1_\X,-) \colon \X \to \Set$
induces a functor $\RingObj(\X) \to \Ring$, which we also denote by $\Gamma(\X,-)$.

If $f_* \colon \X \to \Y$ is a functor between categories with finite products that
preserves finite products, then $f_*$ induces a natural transformation
$\Gamma(\X,-) \to \Gamma(\Y, f_*(-))$ of functors $\X \to \Set$, as follows.
Because a terminal object is a product indexed by the empty set, we have
$f^*(1_\X) = 1_\Y$. Thus for any $A \in \X$, $f_*$ induces the arrow below:
\[
\Gamma(\X,A) = \Hom_\X(1_\X, A) \to \Hom_\Y(f_*(1_\X), f_*(A)) = \Hom_\Y(1_\Y, f_*(A)) = \Gamma(\Y,f_*(A)).
\]
We say that $f_*$ \emph{preserves global sections} if the transformation
above is a natural isomorphism $\Gamma(\X,-) \cong \Gamma(\Y,f_*(-))$.
We let $\fpCat$ denote the category whose objects are categories with finite
products and whose morphisms are functors that preserve finite products
and global sections.
(Recall Remark~\ref{rem:universe} regarding size issues.)

\begin{definition}
\label{def:ringed categories}
A \emph{ringed category} is a pair $(\X, \O)$ where $\X$ is a category with finite products
and $\O$ is a ring object in $\X$; we refer to $\O$ as the \emph{structure ring object} of
$(\X,\O)$. A \emph{morphism} of ringed categories $(\X,\O_\X) \to (\Y, \O_\Y)$ is a pair
$(f_*, \underline{f})$ where $f_* \colon \X \to \Y$ is a functor in $\fpCat$
and $\underline{f} \colon \O_\Y \to f_* \O_\X$ is a morphism in $\RingObj(\Y)$.
Given morphisms $(f_*, \underline{f}) \colon (\X,\O_\X) \to (\Y, \O_\Y)$ and
$(g_*, \underline{g}) \colon (\Y, \O_Y) \to (\ZZ, \O_\ZZ)$, we define their \emph{composite}
to be the morphism 
\[
(g_*, \underline{g}) \circ (f_*, \underline{f}) = (g_* \circ f_*, \underline{g} \circ g_*(\underline{f})).
\]
This operation is readily seen to be associative.
The category $\RingedCat$ has ringed categories for objects, with morphisms and
composition rule as above.
The assignment $(\X, \O_\X) \mapsto \Gamma(\X, \O_\X)$ defines a functor
$\Gamma \colon \RingedCat\op \to \Ring$, which we also refer to as the \emph{global
sections functor}.
\end{definition}

The \emph{zero ring object} of a category with finite products is the terminal object,
equipped with its unique structure as a ring object. The following observation will
prove useful.

\begin{lemma}
\label{lem:zero sections}
Let $(\X, \O)$ be a ringed category. If $\Gamma(\X,\O)$ is the zero ring, then $\O$
is the zero ring object on $\X$.
\end{lemma}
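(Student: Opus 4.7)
The plan is to leverage the fact that the ring structure on $\O$ induces ring structures on all the hom-sets $\Hom_\X(A, \O)$ in a functorial way, and to observe that the hypothesis forces $0 = 1$ at every level.

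First I would unpack the hypothesis. To say $\Gamma(\X, \O) = \Hom_\X(1_\X, \O)$ is the zero ring means that this ring has a single element, so the additive and multiplicative identities coincide; that is, the two morphisms $0, 1 \colon 1_\X \to \O$ that are part of the ring object structure are actually equal as morphisms in $\X$.

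Next I would use the standard fact that for any $A \in \X$, the set $\Hom_\X(A,\O)$ acquires a ring structure from the ring object structure on $\O$, where the additive and multiplicative identities are the composites
\[
A \to 1_\X \xrightarrow{\,0\,} \O \quad \text{and} \quad A \to 1_\X \xrightarrow{\,1\,} \O
\]
respectively (using the unique morphism to the terminal object). Since $0 = 1$ as morphisms out of $1_\X$, these two composites agree, so $\Hom_\X(A, \O)$ has $0 = 1$ and is therefore the zero ring. In particular, $\Hom_\X(A, \O)$ is a singleton.

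Having shown that every object of $\X$ admits a unique morphism to $\O$, I would conclude that $\O$ is a terminal object of $\X$, hence (using the convention from the introduction that terminal objects are identified) $\O = 1_\X$ equipped with its unique ring object structure, which is by definition the zero ring object. I do not anticipate a serious obstacle: the only subtlety is being careful that the ring structure on $\Hom_\X(A, \O)$ is built pointwise from the structure maps of $\O$ composed after the unique map $A \to 1_\X$, so that equality of the structure morphisms $0$ and $1$ at the terminal object propagates to equality on every hom-set.
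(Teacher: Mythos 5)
Your proof is correct and takes essentially the same approach as the paper: both use the ring structure on $\Hom_\X(A,\O)$ induced by the ring object structure on $\O$, observe that the map $A \to 1_\X$ forces $\Hom_\X(A,\O)$ to be the zero ring, and conclude that $\O$ is a terminal object, hence the zero ring object. The only cosmetic difference is that the paper invokes the induced ring homomorphism $\Gamma(\X,\O) \to \Hom_\X(A,\O)$ abstractly, while you unwind this to show directly that the identities $0$ and $1$ coincide in each hom-set.
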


\begin{proof}
Because $\O$ is a ring object in $\X$, the Hom-functor
$\Hom_\X(-,\O)$ has the extra structure of a functor $\X\op \to \Ring$. Given
any object $A \in \X$, the morphism to the terminal object $A \to 1_\X$ induces
a ring homomorphism 
\[
0 = \Gamma(\X, \O_\X) = \Hom_\X(1_\X, \O_\X) \to \Hom_\X(A, \X).
\]
This implies that $\Hom_\X(A, \O)$ is the zero ring and consequently is a singleton
set for all $A$. Thus $\O$ is a terminal object, and therefore is a zero ring object.
\end{proof}

For the category $\Sh(X)$ of sheaves of sets on a topological space $X$, the category
of ring objects in $\Sh(X)$ is the same as the category of sheaves of rings on $X$;
see~\cite[II.7]{MacLaneMoerdijk}.
This provides a functor from the category of ringed spaces~\cite[II.2]{Hartshorne} to the
category of ringed categories, which is faithful.
In this way, we view the category of ringed spaces as a subcategory of $\RingedCat$.
By composing with this embedding, we may consider the Zariski spectrum as a functor
$\Spec \colon \cRingop \to \RingedCat$.

\separate

We are finally prepared to prove the second of the major results of Section~\ref{sec:intro}.

\begin{proof}[Proof of Theorem~\ref{thm:ringed category}]
Let $\C$ be the category defined as follows:
\begin{itemize}
\item The objects of $\C$ are ordered triples $(\RingObj(\X), \O_\X, \Gamma(\X,-))$,
where $(\X,\O_\X)$ is a ringed category and $\Gamma(\X,-) \colon \RingObj(\X) \to \Ring$
is the (co)restriction of the global sections functor on $\X$.
\item A morphism 
\[
f \colon (\RingObj(\X), \O_\X, \Gamma(\X,-)) \to (\RingObj(\Y), \O_\Y, \Gamma(\Y,-))
\]
is a morphism of ringed categories $f = (f_*, \underline{f}) \colon (\X,\O_\X) \to (\Y,\O_\Y)$.
\end{itemize}
The construction of $\RingedCat$ and the discussion preceding this proof indicate that
$\C$ satisfies Hypothesis~\ref{hyp:category}.
There is a rather obvious functor $G \colon \RingedCat \to \C$, defined by
$(\X,\O_\X) \mapsto (\RingObj(\X), \O_\X, \Gamma(\X,-))$ for objects and defined in the
trivial way on morphisms.

Let $F \colon \Ringop \to \RingedCat$ and $\eta \colon \id_\Ring \to \Gamma \circ F$ be
as in the statement of Theorem~\ref{thm:ringed category}, and define
$\Sp = G \circ F \colon \Ringop \to \C$. By construction of $G$, the functor
\[
\Gamma(\Sp(-), \O_{\Sp(-)}) \colon \Ring \to \Ring
\]
coincides with the composite $\Gamma \circ F \colon \Ring \to \Ring$. 
Thus $\eta$ is a natural transformation
$\id_\Ring \to \Gamma \circ F = \Gamma(\Sp(-), \O_{\Sp(-)})$.
The assumptions on $F$ and $\eta$ guarantee that the functor $\Sp$
and natural transformation $\eta$ satisfy Hypothesis~\ref{hyp:functor}.

Fix a ring $k$ and integer $n \geq 2$. Theorem~\ref{thm:zerosections}
implies that $\Gamma(\Sp(\M_n(k)), \O_{\Sp(\M_n(k))}) = 0$.
By the construction of $G$ and $\Sp = G \circ F$, one readily sees that this
equation means that the ringed category $F(\M_n(k)) = (\X, \O_\X)$ has ring of
global sections $\Gamma(\X, \O_\X) = 0$. It follows from Lemma~\ref{lem:zero sections}
that $\O_\X$ is the zero ring object of $\X$.
\end{proof}

\separate

A similar obstruction, for contravariant functors from the category of rings
to the category of ringed toposes, was presented by van den Berg and Heunen
in~\cite[Corollary~6.3]{BergHeunen:extending}. 
To compare Theorem~\ref{thm:ringed category} with their result, we briefly review
the necessary terminology.

A \emph{topos} can be defined tersely as a category with finite limits and power
objects. (This is sometimes called an \emph{elementary topos}, to distinguish from
the more specialized notion of a \emph{Grothendieck topos}.) The quintessential
example of a topos is the category of sheaves of sets on a fixed topological space.
We will not recall much of topos theory, but the reader is referred
to~\cite{MacLaneMoerdijk} for an in-depth treatment.

If $\X$ and $\Y$ are toposes, a \emph{geometric morphism} $f \colon \X \to \Y$ is a
pair $f = (f^*, f_*)$ such that $f^* \colon \Y \to \X$ is and $f_* \colon \X \to \Y$
are functors (respectively called the \emph{inverse image} and \emph{direct image}
of $f$) such that $(f^*,f_*)$ is an adjoint pair and $f^*$ preserves finite limits.
Let $\Topos$ denote the category whose objects are toposes and whose morphisms
are geometric morphisms.
Given a morphism $f \colon \X \to \Y$ in $\Topos$, the right adjoint functor
$f_* \colon \X \to \Y$ preserves limits, including finite products. We claim that
$f_*$ preserves global sections as well. Indeed, because $f^*$ preserves finite
limits and a terminal object is the limit of the empty diagram, we have
$f^*(1_\Y) = 1_\X$. So
\[
\Gamma(\X,A) = \Hom_\X(1_\X, A) = \Hom_\X(f^*(1_\Y), A) \cong \Hom_\Y(1_\Y, f_*(A)) = \Gamma(\Y,f_*(A)).
\]
This establishes the existence of a ``forgetful'' functor $\Topos \to \fpCat$ that
acts trivially on objects and sends a geometric morphism to its direct image part.

A \emph{ringed topos} is a pair $(\X, \O)$ where $\X$ is a topos and $\O$ is a ring object
in $\X$. A \emph{morphism} of ringed toposes $(\X,\O_\X) \to (\Y, \O_\Y)$ consists of a
pair $(f,\underline{f})$ where $f = (f^*, f_*) \colon \X \to \Y$ is a geometric morphism
and $\underline{f} \colon \O_\Y \to f_* \O_\X$ is a morphism of ring objects in $\Y$.
The category $\RingedTopos$ is the category of ringed toposes and their morphisms.
There is a ``forgetful'' functor $\RingedTopos \to \RingedCat$, which
acts trivially on objects and acts on morphisms by $(f, \underline{f}) \mapsto (f_*, \underline{f})$.
Clearly this functor preserves global sections  and reflects which ringed
toposes have trivial structure ring objects. It is now quite clear that 
\emph{Theorem~\ref{thm:ringed category} still holds when the category $\RingedCat$
is replaced by the category $\RingedTopos$.} 

\begin{remark}
\label{rem:ringed topos}
The result~\cite[Corollary~6.3]{BergHeunen:extending} of van den Berg and Heunen
states that any functor $F \colon \Ringop \to \RingedTopos$ that extends the usual Zariski
spectrum must assign the trivial object to $\M_n(\complex)$ for $n \geq 3$. Their conclusion
is much stronger than the claim that $\M_n(\complex)$ is assigned a ringed topos with
trivial structure sheaf, and actually stems from a \emph{topological} obstruction
about functors to the category of toposes~\cite[Corollary~6.2]{BergHeunen:extending}.
Their result also relies crucially on the Kochen-Specker Theorem~\cite{KochenSpecker}
(as did the results of~\cite{Reyes}).

By contrast, Theorem~\ref{thm:ringed category} (and its ringed topos version)
is of a more \emph{algebro-geometric} nature: while the topos (a topological structure)
assigned to $\M_n(k)$ may not be trivial, the corresponding sheaf of rings (which
enhances the topological structure to a geometric structure) must be zero.
Furthermore, because the theorem is valid for $n = 2$, it is clearly
independent of the Kochen-Specker Theorem.
\end{remark}

\separate

We conclude with a question that is suggested by Theorem~\ref{thm:ringed category}
and Remark~\ref{rem:ringed topos}. 
Let $\Cat$ denote the category of categories with functors for morphisms.
(Recall Remark~\ref{rem:universe} regarding size issues.)
We have already defined a forgetful functor $\Topos \to \fpCat$ sending
geometric morphisms to their direct image parts. 
Composing further with the forgetful functor $\fpCat \to \Cat$ yields
a forgetful functor $\Topos \to \Cat$. Taking inverse images of geometric 
morphisms ($f \mapsto f^*$) provides another forgetful functor $\Topos\op \to \Cat$.
By the \emph{trivial category} we mean the terminal object in
$\Cat$ (and $\fpCat$), the category with one object and one morphism.

We know that any functor $\Ringop \to \RingedTopos$ extending the Zariski
spectrum must assign the trivial object to $\M_3(\complex)$ by van den Berg
and Heunen's result.
Even if one tries to construct a functor $\Ringop \to \RingedCat$ extending
the spectrum but generalizing the underlying topos,
Theorem~\ref{thm:ringed category} shows that the ring object assigned
to $\M_3(\complex)$ must be trivial.
Since van den Berg and Heunen's obstruction stems from an underlying
topological (Kochen-Specker) obstruction, we ask whether the same kind of
underlying obstruction occurs in the case of $\RingedCat$. 

\begin{question}
\label{q:obstruction}
Fix an integer $n \geq 3$.
Let $F \colon \Ringop \to \fpCat$ be a functor whose restriction to
$\cRingop$ is the composite of $\Spec \colon \cRingop \to \Topos$
with the forgetful functor $\Topos \to \fpCat$. Does it follow that
$F(\M_n(\complex))$ is the trivial category?

More generally, suppose that $F \colon \Ringop \to \Cat$ (respectively,
$F \colon \Ring \to \Cat$) is a functor whose restriction to $\cRing$
is the composite of $\Spec \colon \cRingop \to \Topos$ with
the forgetful functor $\Topos \to \Cat$ (respectively, 
$\Topos\op \to \Cat$). What can be said about $F(\M_n(\complex))$?
\end{question}

\bibliographystyle{amsplain}
\bibliography{ZeroSheaves}
\end{document}